\begin{document}

% \mmbox enables macros to survive outside of $ ... $
\newcommand{\mmbox}[1]{\mbox{${#1}$}}
\newcommand{\proj}[1]{\mmbox{{\mathbb P}^{#1}}}
\newcommand{\affine}[1]{\mmbox{{\mathbb A}^{#1}}}
\newcommand{\Ann}[1]{\mmbox{{\rm Ann}({#1})}}
\newcommand{\caps}[3]{\mmbox{{#1}_{#2} \cap \ldots \cap {#1}_{#3}}}
\newcommand{\N}{{\mathbb N}}
\newcommand{\Z}{{\mathbb Z}}
\newcommand{\R}{{\mathbb R}}
\newcommand{\K}{{\mathbb K}}
\newcommand{\p}{{\mathbb P}}
\newcommand{\A}{{\mathcal A}}
\newcommand{\RJ}{{\mathcal R}/{\mathcal J}}
\newcommand{\C}{{\mathbb C}}
\newcommand{\CR}{C^r(\hat P)}
 \newcommand{\aff}{\mathop{\rm aff}}
\newcommand{\arrow}[1]{\stackrel{#1}{\longrightarrow}}
\newcommand{\coker}{\mathop{\rm coker}\nolimits}
\sloppy
\newtheorem{defn0}{Definition}[section]
\newtheorem{prop0}[defn0]{Proposition}
\newtheorem{conj0}[defn0]{Conjecture}
\newtheorem{thm0}[defn0]{Theorem}
\newtheorem{lem0}[defn0]{Lemma}
\newtheorem{rmk0}[defn0]{Remark}
\newtheorem{corollary0}[defn0]{Corollary}
\newtheorem{example0}[defn0]{Example}
\newenvironment{defn}{\begin{defn0}}{\end{defn0}}
\newenvironment{prop}{\begin{prop0}}{\end{prop0}}
\newenvironment{conj}{\begin{conj0}}{\end{conj0}}
\newenvironment{thm}{\begin{thm0}}{\end{thm0}}
\newenvironment{lem}{\begin{lem0}}{\end{lem0}}
\newenvironment{cor}{\begin{corollary0}}{\end{corollary0}}
\newenvironment{exm}{\begin{example0}\rm}{\end{example0}}
\newenvironment{rmk}{\begin{rmk0}\rm}{\end{rmk0}}

\newcommand{\defref}[1]{Definition~\ref{#1}}
\newcommand{\propref}[1]{Proposition~\ref{#1}}
\newcommand{\thmref}[1]{Theorem~\ref{#1}}
\newcommand{\lemref}[1]{Lemma~\ref{#1}}
\newcommand{\corref}[1]{Corollary~\ref{#1}}
\newcommand{\exref}[1]{Example~\ref{#1}}
\newcommand{\secref}[1]{Section~\ref{#1}}
\newcommand{\poina}{\pi({\mathcal A}, t)}
\newcommand{\poinc}{\pi({\mathcal C}, t)}
\newcommand{\std}{Gr\"{o}bner}
\newcommand{\jq}{J_{Q}}
\def\B{Bernstein-B\'ezier}
\title {Subdivision and spline spaces}

\author{Hal Schenck}
\thanks{H. Schenck is partially supported by  NSF grant \#1312071}\address{Schenck: Mathematics Department \\ University of Illinois Urbana-Champaign\\
  Urbana \\ IL 61801\\ USA}
\email{schenck@math.uiuc.edu}

\author{Tatyana Sorokina}
\thanks{T. Sorokina is partially supported by a grant from the Simons Foundation \#235411 }\address{Sorokina: Department of Mathematics \\ Towson University\\
  Towson \\ MD 21252\\ USA}
\email{tsorokina@towson.edu}

\subjclass[2000]{Primary 41A15, Secondary 13D40, 52C99} \keywords{spline, dimension formula}

\begin{abstract}
\noindent A standard construction in approximation theory is mesh
refinement. For a simplicial or polyhedral mesh $\Delta \subseteq
\R^k$, we study the subdivision $\Delta'$ obtained by subdividing a
maximal cell of $\Delta$. We give sufficient conditions for the module
of splines on $\Delta'$ to split as the direct sum of splines on $\Delta$ and splines on the subdivided cell. As a consequence, we obtain dimension formulas and explicit bases for several commonly used subdivisions and their multivariate generalizations.
\end{abstract}
\maketitle
\vskip -.1in
%%%%%%%%%%%%%%%%%%%%%%%%%%%%%%%%%%%%%%%%%%%%%%%%%%%%%%%%
% Leave room to correct
%\renewcommand{\baselinestretch}{1.5}
%\small\normalsize % to get previous line to take
%%%%%%%%%%%%%%%%%%%%%%%%%%%%%%%%%%%%%%%%%%%%%%%%%%%%%%%%
\vskip -.1in

\section{Introduction}\label{intro}
Splines are fundamental objects in approximation theory, computer aided  geometric 
design and modeling, and  the finite element method for solving PDEs. Starting 
 with a simplicial or polyhedral complex $\Delta$ (or mesh) which 
partitions a region in $\R^k$, it may be the case that the mesh is 
too coarse for the specific application. So a natural approach is 
to refine the mesh via subdivision. 

We use $\Delta$ to denote a $k$-dimensional 
simplicial complex in $\R^k$, $\Delta_i$ the set of $i$-dimensional faces, and
$\Delta_i^0$ the set of interior $i$-dimensional faces; all $k$-dimensional faces are considered interior so $\Delta_k = \Delta_k^0$. Although we work in
the simplicial setting, all our results generalize easily to the 
polyhedral case. We analyze a special type of subdivision, 
where the original mesh $\Delta$ is modified by subdividing a single 
maximal cell $\sigma \in \Delta_k$. For the resulting object $\Delta'$ to 
be a complex, it is necessary that any modifications made to 
the boundary of $\sigma$ occur only on $\sigma \cap \partial(\Delta)$. 

The principal technique we use is the homological approach introduced by Billera
in \cite{b}, combined with the observation that  if $\widehat \Delta$ is the cone over $\Delta$, then $S^r(\widehat \Delta)$ is a graded module over the polynomial ring $R=\mathbb{R}[x_0,\ldots,x_k]$, and the dimension 
of $S^r_d(\Delta)$ is the dimension of the $d^{th}$ graded piece of 
$S^r(\widehat \Delta)$. 

Our main result is a sufficient condition for the set of 
splines on $\widehat \Delta'$ to split as the direct sum of splines on $\widehat \Delta$ and 
splines on the subdivided cell $\widehat \Delta''$:
\[
S^r(\widehat\Delta') \simeq S^r(\widehat\Delta) \bigoplus \Big(S^r(\widehat\Delta'')/\R[x_0,\ldots, x_k]\Big),
\]
where quotienting of the second summand by $\R[x_0,\ldots, x_k]$ 
corresponds to eliminating the splines defined by the same polynomial  on all maximal cells of $\Delta''$.
As a consequence, we obtain dimension 
formulas and explicit bases for several commonly used subdivisions, 
their multivariate generalizations,  as well as on various 
intermediate subdivisions. For these subdivisions, $S^r(\Delta')$ is free,
and a generalization \cite{s1} of Schumaker's lower bound for the
planar case \cite{schumaker} gives the correct dimension.

\section{Homology and subdivisions}\label{homology}
\noindent We work with the modification of Billera's complex introduced in \cite{ss}. 
Throughout this paper, our basic references are~\cite{ls} for 
splines and~\cite{E} for  algebra.
\begin{defn}\label{compC} For a full-dimensional simplicial complex 
$\Delta \subseteq \R^k$, 
let $\RJ(\Delta)$ be the complex of $R=\R[x_0,\ldots,x_k]$ modules, 
with differential $\partial_i$ the usual boundary operator in relative
(modulo boundary) homology.
\[
0 \longrightarrow \bigoplus\limits_{\sigma \in \Delta_k} R 
\stackrel{\partial_k}{\longrightarrow} \bigoplus\limits_{\tau \in \Delta_{k-1}^0} R/J_{\tau}  
\stackrel{\partial_{k-1}}{\longrightarrow} \bigoplus\limits_{\psi \in \Delta_{k-2}^0} R/J_{\psi}  
\stackrel{\partial_{k-2}}{\longrightarrow} \ldots
\stackrel{\partial_{1}}{\longrightarrow} \bigoplus\limits_{v \in \Delta_{0}^0} R/J_{v}\longrightarrow 0,
\]
where for an interior $i$-face $\gamma \in \Delta_i^0$, we define 
\[
J_\gamma = \langle l_{\widehat \tau}^{r+1} \mid \gamma \subseteq \tau \in \Delta_{k-1} \rangle.
\]
\end{defn}
\noindent The ideal $J_\gamma$ is generated by $r+1^{st}$ powers of 
homogenizations $l_{\widehat\tau}$ of linear forms $l_\tau$ whose vanishing defines the affine span of faces
$\tau$  containing $\gamma$. The top homology module of $\RJ(\Delta)$ 
computes splines of smoothness $r$ on $\widehat{\Delta}$.

\begin{thm}\cite{s1}\label{freeness}
If $\Delta$ is a 
topological $k$-ball, then the module
$S^r(\widehat \Delta)$ is free iff $H_i(\RJ(\Delta))=0$ for all $i <
k$. In this case, 
\[
\dim S^r(\widehat \Delta)_d = \sum\limits_{i=0}^k (-1)^i \dim
(\RJ_{k-i})_d,
\]
where $\RJ_{k-i}=\bigoplus_{\psi\in\Delta^0_{k-i}}R/J_\psi$.
\end{thm}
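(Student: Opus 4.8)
The plan is to read off $S^r(\widehat\Delta)$ as the top homology $H_k(\RJ(\Delta)) = \ker \partial_k$, so that $S^r(\widehat\Delta)$ is a graded submodule of the free module $\bigoplus_{\sigma \in \Delta_k} R$, hence torsion-free, and then to characterize its freeness by depth. Over the regular ring $R = \R[x_0,\dots,x_k]$ of dimension $k+1$, the Auslander--Buchsbaum formula says a finitely generated graded module is free exactly when it is maximal Cohen--Macaulay, and by local duality this is equivalent to the vanishing of the local cohomology modules $H^j_{\mathfrak m}(S^r(\widehat\Delta))$ for all $j < k+1$, where $\mathfrak m = (x_0,\dots,x_k)$. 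Both assertions will then follow from a single computation comparing the homology of $\RJ(\Delta)$ against local cohomology.

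The dimension formula is the easy half. Each graded piece is a finite-dimensional real vector space and the $\partial_i$ are graded, so rank--nullity applied degree by degree yields the Euler--Poincar\'e identity equating the alternating sum $\sum_{i=0}^k (-1)^i \dim(\RJ_{k-i})_d$ with $\sum_{i=0}^k (-1)^i \dim H_{k-i}(\RJ(\Delta))_d$. Granting $H_i(\RJ(\Delta)) = 0$ for $i < k$ kills every term on the right except the top one $H_k(\RJ(\Delta)) = S^r(\widehat\Delta)$, which is precisely the displayed formula.

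For the freeness criterion I would run the two hypercohomology spectral sequences of the double complex obtained by resolving each term of $\RJ(\Delta)$ by a complex computing $H^\bullet_{\mathfrak m}$ (for instance the \v{C}ech complex on $x_0,\dots,x_k$), both abutting to $\mathbb H^\bullet_{\mathfrak m}(\RJ(\Delta))$. One spectral sequence has $E_2$-page $H^p_{\mathfrak m}(H_q(\RJ(\Delta)))$, isolating the contribution $H^p_{\mathfrak m}(S^r(\widehat\Delta))$ that governs freeness. The other has $E_1$-page $H^p_{\mathfrak m}(C_q)$, where the free term $C_k = \bigoplus R$ contributes only in cohomological degree $k+1$, and each $C_q = \bigoplus_\gamma R/J_\gamma$ with $q < k$ contributes in the range $\operatorname{depth}(R/J_\gamma) \le p \le \dim(R/J_\gamma) = q+1$. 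The topological hypothesis enters exactly here: setting every $J_\gamma = 0$ turns $\RJ(\Delta)$ into the relative simplicial chain complex of $(\Delta,\partial\Delta)$ with coefficients in $R$, whose homology for a $k$-ball is $R$ in degree $k$ and zero below, so the underlying topology contributes nothing to the lower homology and all the low-degree input to the abutment is ideal-theoretic. Comparing the two spectral sequences, a nonzero $H_q(\RJ(\Delta))$ with $q<k$ forces local cohomology of $S^r(\widehat\Delta)$ below degree $k+1$, so $S^r(\widehat\Delta)$ fails to be maximal Cohen--Macaulay; conversely, freeness forces all these obstruction modules to vanish.

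The main obstacle is this comparison, because the intermediate modules $R/J_\gamma$ are generally \emph{not} Cohen--Macaulay: when an interior face lies on three or more facets the defining powers $l_{\widehat\tau}^{r+1}$ fail to form a regular sequence, producing a nonzero $H^0_{\mathfrak m}(R/J_\gamma)$ and extra local cohomology in intermediate degrees. The bookkeeping of supports, depths, and the spectral-sequence differentials must therefore be carried out carefully enough to show that these contributions assemble into exactly the condition $H_i(\RJ(\Delta))=0$ for $i<k$, with the ball hypothesis separating the topological part from the ideal-theoretic part. The converse implication, freeness forcing the vanishing, is the most delicate point, since it requires ruling out accidental cancellation among the columns of the spectral sequence; this is where the grading and the ball structure are indispensable.
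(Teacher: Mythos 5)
First, note that the paper does not prove this statement: Theorem~\ref{freeness} is quoted from \cite{s1}, whose entire method is the one you propose. The architecture of your sketch --- identify $S^r(\widehat\Delta)$ with the top homology $\ker\partial_k$, characterize freeness over $R$ by Auslander--Buchsbaum via vanishing of $H^j_{\mathfrak{m}}$ for $j<k+1$, build the double complex of the \v{C}ech complex on $x_0,\dots,x_k$ against $\RJ(\Delta)$, and compare the two hypercohomology spectral sequences --- is precisely the spectral sequence of \cite{s1} (and of \cite{ss} in the bivariate case). Your Euler--Poincar\'e argument for the dimension formula is complete and correct, and is the standard one: once $H_i(\RJ(\Delta))=0$ for $i<k$, the alternating sum of the $\dim(\RJ_{k-i})_d$ collapses to $\dim H_k(\RJ(\Delta))_d=\dim S^r(\widehat\Delta)_d$.

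There are, however, two substantive problems with the freeness half. First, you misdiagnose the main obstacle: your claim that the modules $R/J_\gamma$ are generally not Cohen--Macaulay, with $H^0_{\mathfrak{m}}(R/J_\gamma)\neq 0$, is false for the faces that actually occur in $\RJ(\Delta)$, which are \emph{interior}. For $\gamma\in\Delta_q^0$ the forms $l_{\widehat\tau}$ with $\gamma\subseteq\tau$ span the space of linear forms vanishing on the span of $\widehat\gamma$, so after a linear change of coordinates $R/J_\gamma\cong A\otimes_{\R}\R[t_0,\dots,t_q]$ with $A$ Artinian; this is Cohen--Macaulay of dimension $q+1$ even though three or more powers $l_{\widehat\tau}^{r+1}$ fail to form a regular sequence (they generate an $\mathfrak{m}$-primary ideal in the complementary variables, which is all that matters). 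Consequently $H^p_{\mathfrak{m}}(R/J_\gamma)$ is nonzero only for $p=q+1$, your first spectral sequence is concentrated on the single antidiagonal $p+q=k+1$, and the implication ``$H_i(\RJ(\Delta))=0$ for $i<k$ implies $S^r(\widehat\Delta)$ free'' follows from an elementary depth chase along the exact sequence $0\to S^r(\widehat\Delta)\to\RJ_k\to\cdots\to\RJ_0\to 0$, with no spectral sequence needed for this direction at all. Second, and more seriously, the converse --- freeness forces the vanishing --- is exactly where you stop at ``the bookkeeping must be carried out carefully'' and ``ruling out accidental cancellation,'' and that is the genuine content of the theorem. The missing ingredient in your sketch is a bound on the supports, hence Krull dimensions, of the lower homology modules $H_q(\RJ(\Delta))$ for $q<k$ (in the planar case of \cite{ss}, for instance, $H_1$ has finite length for a disk); such a bound places the entries $H^p_{\mathfrak{m}}(H_q)$ of the second spectral sequence in total degrees at most $k$, where the abutment vanishes, after which Grothendieck nonvanishing applied to the extremal entry of the lowest nonvanishing homology row produces a class that survives to $E_\infty$ --- a contradiction. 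Without a dimension bound of this kind, nonzero entries from the $S^r(\widehat\Delta)$ row and from the lower-homology rows can coexist in total degree $k+1$, and nothing in your proposal excludes this. In short: you have rediscovered the route of \cite{s1}, the easy half is done, but the decisive lemma for the converse is absent and your stated ``main obstacle'' is not the real one.
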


\subsection{Split subdivisions} 
Our strategy is to relate splines on a simplicial complex  $\Delta$ to splines 
on a complex $\Delta'$ obtained by subdividing some $\sigma \in \Delta_k$. 
\begin{defn}\label{splitSD}
Let $\Delta \subseteq \R^k$ be a $k$-dimensional simplicial complex, 
$\sigma \in \Delta_k$, and $\Delta''$ a subdivision of $\sigma$,
such that $\partial(\sigma) = \partial(\Delta'')$ on $\Delta^0$. 
Then the resulting subdivision $\Delta'$ is again a
simplicial complex, and we call the subdivision a {\em\bf simple}
subdivision. For each $i$-face $\gamma \in \Delta'_i$, let $J(\Delta')_\gamma$ 
denote the ideal in Definition~\ref{compC}. We call a simple 
subdivision $\Delta'$ {\em\bf split} if for every 
$\gamma \in \partial(\Delta'')_i$ but not in $\partial(\Delta')$,
\[
J(\Delta')_\gamma = J(\Delta)_\gamma.
\]
\end{defn}
Note that Definition~\ref{splitSD} imposes no conditions on faces of $\Delta'' \cap \partial(\Delta')$.
The following example illustrates simple and split subdivisions.
\begin{exm}\label{r1}
We start with $\Delta$ depicted in Figure~\ref{delta}, and subdivide the interior triangle into three subtriangles
as in Figure~\ref{sigma}. Both subdivision $\Delta'$ in Figure~\ref{delta1} and $\widetilde\Delta'$ in Figure~\ref{delta2} are simple. 
Moreover, when $r=1$, both  $\Delta'$ and $\widetilde\Delta'$ are split, 
because when $r=1$ as soon as there are three distinct slopes at a vertex, $J(v)$ is the square
of the ideal of the vertex. However, when $r\geq 2$, only $\Delta'$ in Figure~\ref{delta1}, 
where the new edge has the same slope as an existing edge, is a split
subdivision.
\begin{figure}
\begin{minipage}{0.45\linewidth}
\vskip 10pt
\centering
 \includegraphics[keepaspectratio=true,  width=38mm, height=47mm]{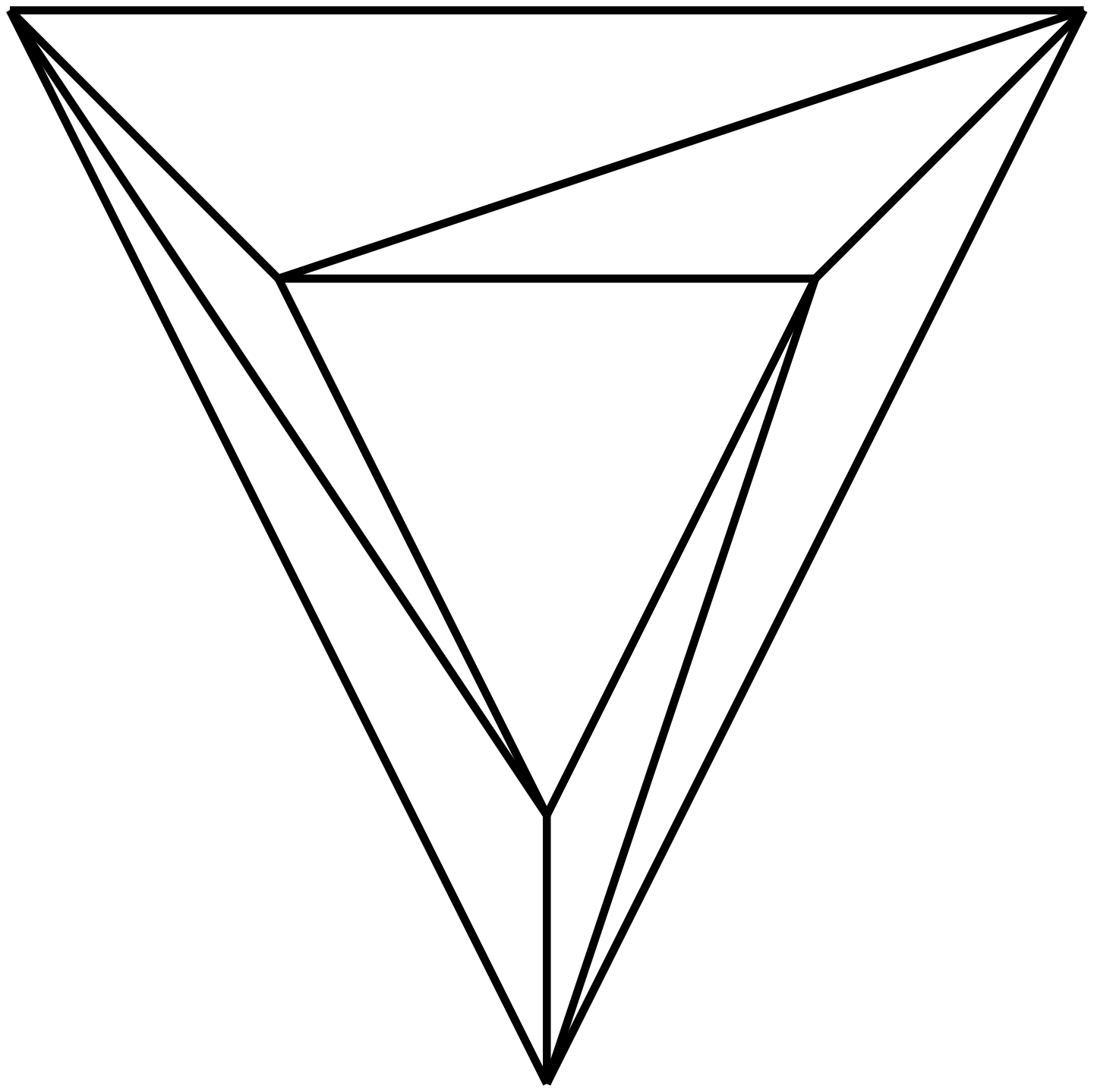}
 \caption{ $\Delta$}
\label{delta}
\end{minipage}
\hspace{0.1cm} 
\begin{minipage}{0.45\linewidth}
\centering
\includegraphics[keepaspectratio=true,  width=38mm, height=47mm]{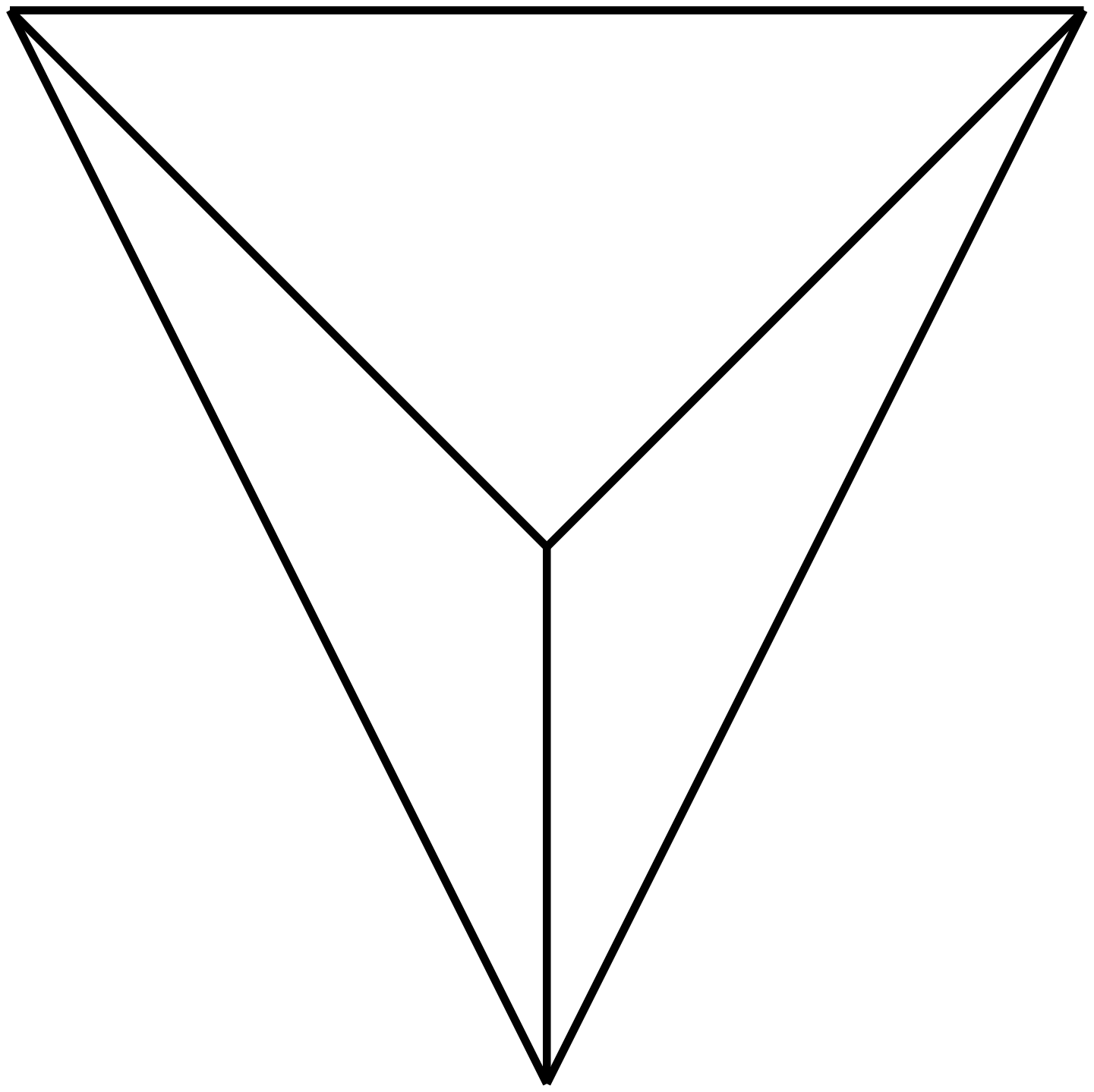}
 \caption{$\Delta''$}
 \label{sigma}
\end{minipage}
\end{figure}
\begin{figure} 
\begin{minipage}{0.45\linewidth}
\centering
\includegraphics[keepaspectratio=true,  width=38mm, height=47mm]{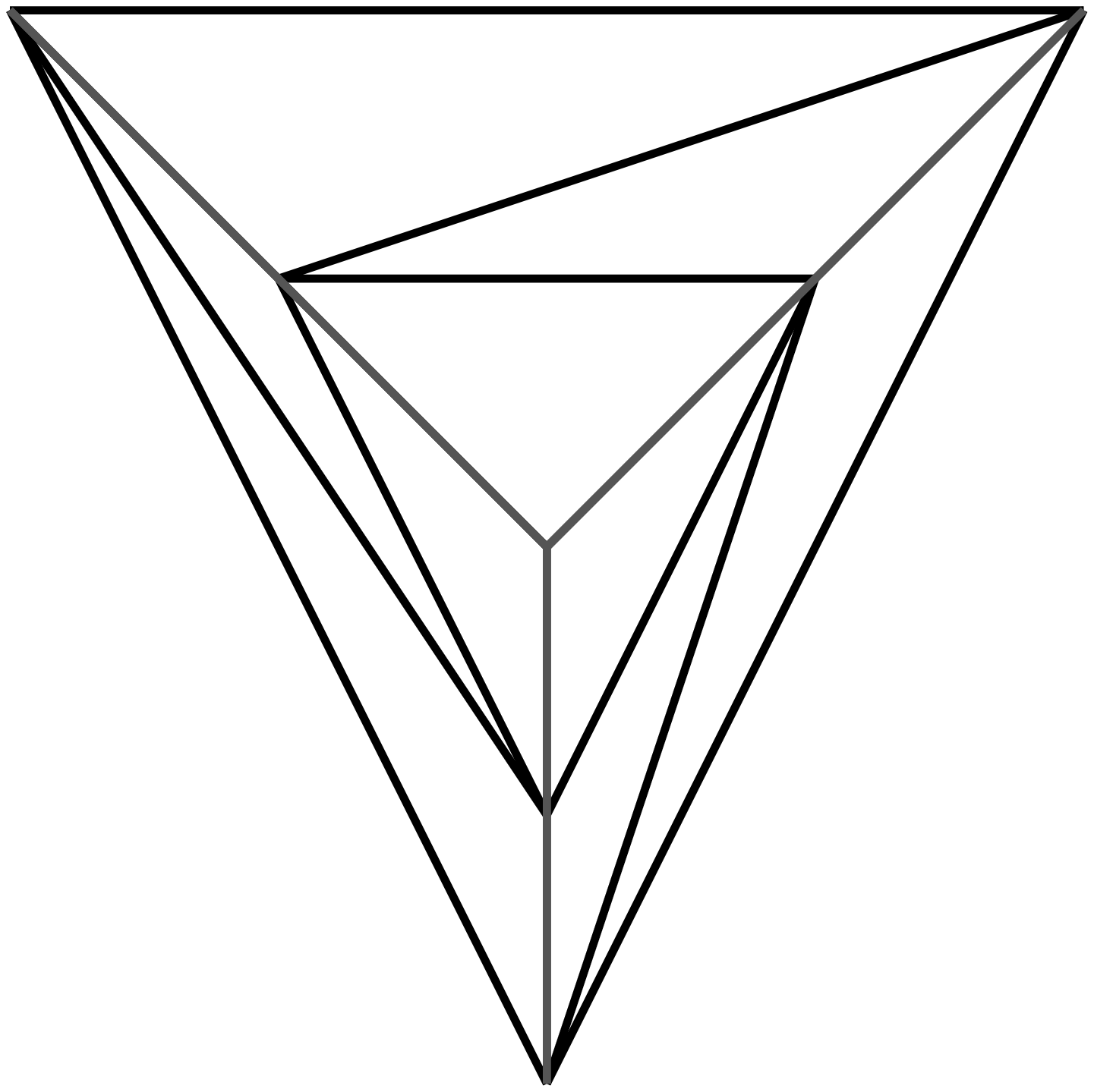}
 \caption{$\Delta'$}
 \label{delta1}
\end{minipage}
\begin{minipage}{0.45\linewidth}
\centering
\includegraphics[keepaspectratio=true,  width=38mm, height=47mm]{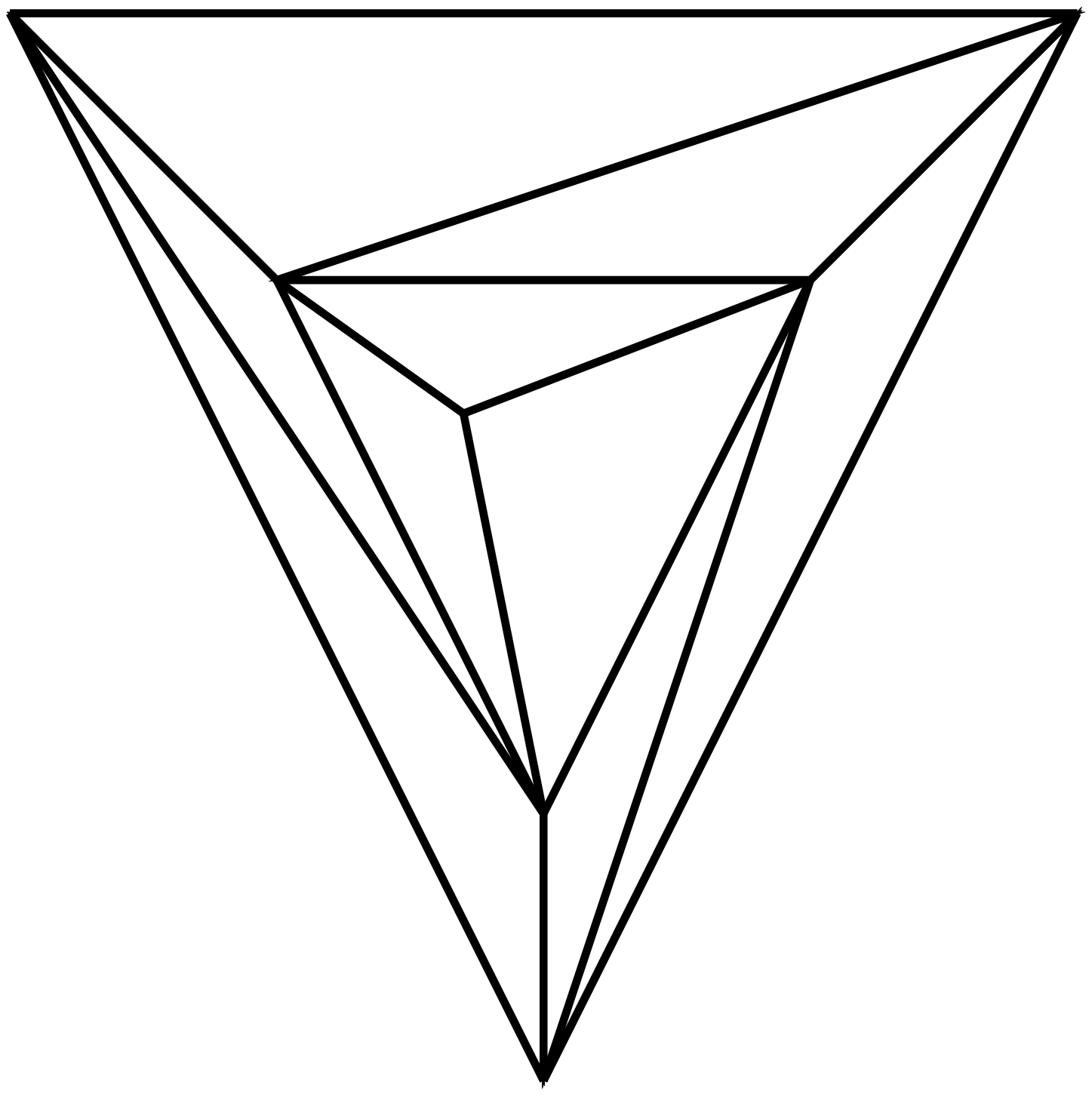}
 \caption{$\widetilde\Delta'$}
 \label{delta2}
\end{minipage}
\end{figure}
\end{exm}

For any subdivision $\Delta'$ of $\Delta$, there is a tautological map of 
chain complexes
\[
\RJ(\Delta) \stackrel{v}{\longrightarrow}\RJ(\Delta'), 
\]
where if $\gamma \in \Delta_k$ is subdivided into $\gamma'_1,\ldots, \gamma'_m \in \Delta'_k$, $v$ is induced by the map
\[
\bigoplus\limits_{\gamma \in \Delta_k}R \stackrel{v}{\longrightarrow} \bigoplus\limits_{\gamma' \in \Delta'_k}R, \mbox{ which sends }1_\gamma \mbox{ to }\sum_{i=1}^m1_{\gamma'_i}.
\] 
In general $v$ will have both a kernel and a cokernel. 
\begin{lem}\label{4terms}
For a simple subdivision $\Delta'$ of $\Delta$, $\ker(v)$ is
supported on $\partial(\Delta'')$, and $\coker(v)$ is supported on $(\Delta'')^0$. If $\Delta'$ is split then $\ker(v) = 0$. 
\end{lem}
\begin{proof}
The faces of $\Delta \setminus \sigma$ and $\Delta' \setminus \Delta''$ are identical, so for $\gamma \in \Delta \setminus \sigma = \Delta' \setminus \Delta''$ the ideal $J(\gamma)$ is also the same. In particular, both $\ker(v)$ and $\coker(v)$ are nonzero only on
$\Delta''$. No face of $\Delta$ meets $(\Delta'')^0$ save $\sigma$
itself, which $v$ maps to the sum of $k$-faces of $\Delta''$. Thus 
\[\coker (v)=
\begin{cases}
&R^{|\Delta''_k|}/R, \hskip 50pt \hbox{in degree $k$},\\
&\bigoplus\limits_{\gamma\in \Delta''_i} R/J(\Delta')_\gamma, \hskip 20pt\hbox{for all $i<k$,}
\end{cases}
\]
and $\coker(v)$ is nonzero only on the interior of $\Delta''$. For the kernel, if 
$\gamma\in \Delta\setminus \sigma$, then 
\[
J(\Delta)_\gamma=J(\Delta')_\gamma,
\]
so the kernel
can only be nonzero on $\partial(\sigma) = \partial(\Delta'').$ If $\Delta'$ is split, 
then again 
\[
J(\Delta)_\gamma=J(\Delta')_\gamma
\]
for $\gamma \in \partial(\sigma)$, hence $\ker(v) = 0$. 
\end{proof}

\begin{prop}\label{diagram}
A split subdivision $\Delta'$ of $\Delta$ gives rise to 
a  short exact sequence of complexes: a commuting diagram where the columns are exact, and the rows form complexes:
\[
\xymatrix{ 
&~&  0 \ar[d] & 0\ar[d]   & 0 \ar[d] \\
&\RJ(\Delta): 0 \ar[r] &\bigoplus\limits_{\sigma \in \Delta_k}\hskip -5ptR \ar[r]^{\hskip -7pt\partial_k}  \ar[d]^{v_k} & \bigoplus\limits_{\tau \in \Delta^0_{k-1}}\hskip -7pt R/J_\tau \ar[r]^{\partial_{k-1}}  \ar[d]^{v_{k-1}}  & \bigoplus\limits_{\psi \in \Delta^0_{k-2}}\hskip -7pt R/J_\psi \ar[r]^{~~~~~~\partial_{k-2}} \ar[d]^{v_{k-2}} & \cdots \\
&\RJ(\Delta'): 0 \ar[r] &\bigoplus\limits_{\sigma \in \Delta'_k}\hskip -5ptR \ar[r]^{\hskip -7pt\partial_k}  \ar[d] & \bigoplus\limits_{\tau \in \Delta'^0_{k-1}}\hskip -7pt R/J_\tau\ar[r]^{\partial_{k-1}}  
\ar[d]  & \bigoplus\limits_{\psi \in \Delta'^0_{k-2}}\hskip -7pt R/J_\psi\ar[r]^{~~~~~~\partial_{k-2}} \ar[d] & \cdots \\
&Q: 0 \ar[r]& \coker(v_k) \ar[r]^{\hskip -7pt\partial_k}  \ar[d] & \coker(v_{k-1})  \ar[r]^{\partial_{k-1}}  
\ar[d]  & \coker(v_{k-2}) \ar[r]^{~~~~~~\partial_{k-2}} \ar[d] & \cdots \\
 &~&  0 & 0  & 0 
}
\]
\end{prop}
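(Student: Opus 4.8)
The plan is to establish the three ingredients of a short exact sequence of complexes in turn: that $v$ is a morphism of complexes, so that $\mathrm{im}(v)$ is a subcomplex and the upper squares commute; that the columns are exact; and that the induced bottom row $Q$ is again a complex. Only the first step requires real work, and the other two follow formally from it together with \lemref{4terms}.

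First I would verify that $v$ commutes with the boundary operators. On every summand indexed by a face $\gamma \in \Delta \setminus \sigma = \Delta' \setminus \Delta''$ the ideals $J(\Delta)_\gamma$ and $J(\Delta')_\gamma$ agree and $v$ is the identity, so those squares commute trivially. The substantive check is the square over $\sigma$: computing one way, $\partial_k v_k(1_\sigma) = \sum_{i=1}^m \partial_k(1_{\gamma'_i})$ is a signed sum of the facets of the cells $\gamma'_i$, in which each interior $(k-1)$-face of $\Delta''$ is a facet of exactly two of the $\gamma'_i$ and occurs with opposite induced orientation; these terms cancel, leaving only the facets on $\partial(\Delta'') = \partial(\sigma)$. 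Computing the other way, $v_{k-1}\partial_k(1_\sigma)$ is the image under the residue maps of the signed sum of the facets of $\sigma$, which is the same element. For the lower differentials one uses that a facet $\psi$ of a face $\gamma \subseteq \partial(\sigma)$ satisfies $J(\Delta)_\psi \subseteq J(\Delta')_\psi$, since subdivision can only enlarge the set of $(k-1)$-faces through $\psi$; hence the natural quotient maps $R/J(\Delta)_\gamma \to R/J(\Delta')_\gamma$ are compatible with $\partial$.

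Granting that $v$ is a chain map, the remainder is formal. Because $\Delta'$ is split, \lemref{4terms} gives $\ker(v) = 0$, so each $v_i$ is injective, and as $\coker(v_i)$ is by definition its cokernel the column $0 \to \RJ(\Delta)_i \to \RJ(\Delta')_i \to \coker(v_i) \to 0$ is exact. The top two rows are complexes by \defref{compC}, and $Q$ is a complex because it is the cokernel of a morphism of complexes: the differentials of $\RJ(\Delta')$ descend to $\coker(v)$ and the identity $\partial_{i-1}\partial_i = 0$ passes to the quotient; the lower squares commute because the maps of $Q$ are the induced ones. The main obstacle is thus the orientation bookkeeping in the square over $\sigma$---the cancellation of the interior facets of $\Delta''$ against one another---which is precisely the assertion that a simplicial subdivision induces a chain map on relative chains.
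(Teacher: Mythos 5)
Your proposal is correct and follows the same route the paper intends: the paper's own proof is the one-line citation ``Follows from Lemma~\ref{4terms} and Definition~\ref{splitSD},'' and your writeup simply supplies the routine verifications left implicit there --- the chain-map check over $\sigma$ via cancellation of interior facets of $\Delta''$, injectivity of $v$ from the split hypothesis via Lemma~\ref{4terms}, and the formal fact that cokernels of a monomorphism of complexes form a complex with exact columns. Nothing in your argument deviates from or adds beyond what the paper's citation tacitly assumes, so this is a faithful (and fuller) rendering of the same proof.
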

\begin{proof}
Follows from Lemma~\ref{4terms} and Definition~\ref{splitSD}.
\end{proof}
\vskip .05in
\begin{cor}~\label{sesS}
If $\Delta'$ is a split subdivision of $\Delta$ and 
$H_{k-1}(\RJ(\Delta))=0$, then 
\[ 
0 \longrightarrow S^r(\widehat \Delta) \longrightarrow S^r(\widehat \Delta') \longrightarrow H_k(Q) \longrightarrow 0 
\]
is an exact sequence, so 
\[ 
\dim S^r(\widehat \Delta')_d= \dim S^r(\widehat \Delta)_d+ \dim
H_k(Q)_d. 
\]
\end{cor}
\begin{proof} Recall that a short exact sequence of complexes yields a long exact sequence in 
homology
\begin{equation*}
 \rightarrow H_{i\!+\!1}(Q) \rightarrow H_{i}(\RJ(\Delta)) 
\rightarrow H_{i}(\RJ(\Delta')) \rightarrow H_{i}(Q) 
\rightarrow H_{i\!-\!1}(\RJ(\Delta)) \rightarrow 
\end{equation*}
and the result follows.
\end{proof}
\subsection{Main result} 
We are now ready to explore why split subdivisions are special.
\begin{thm}\label{splitS}
If $\Delta'$ is a split subdivision of $\Delta$, then for $i < k$, 
\[
\coker(v_{i})\simeq \RJ(\Delta'')_i,
\]
and in degree $k$
\[
\coker(v_k) \simeq (\bigoplus\limits_{\sigma \in \Delta''_k}R)/R,
\]
with the quotient map is via the diagonal.
\end{thm}
\begin{proof}
Since the subdivision is split, the complexes 
$\RJ(\Delta)$ and $\RJ(\Delta')$ agree on the common faces, which are
all faces save those in the interior of $\Delta''$. This means
that the vertical maps in the double complex in Proposition~\ref{diagram} are either
the identity or zero on each individual term of the direct sums. Hence the 
cokernel of $v_i$ is simply $\RJ(\Delta'')_i$. The exception
to this is on $\sigma$, which maps via the identity to each $k$-face of
$\Delta''$.
\end{proof}
\begin{thm}\label{allFree}
If $\Delta'$ is a split subdivision of $\Delta$ and 
both $S^r(\widehat \Delta)$ and $S^r(\widehat \Delta'')$ are free, 
then $S^r(\widehat \Delta')$ is free.
\end{thm}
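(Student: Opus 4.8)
The plan is to convert the freeness hypotheses into vanishing of low-degree homology via Theorem~\ref{freeness}, and then to propagate that vanishing through the two short exact sequences already established. I assume throughout that $\Delta$, and hence the homeomorphic $\Delta'$, is a topological $k$-ball; the subdivided cell $\Delta''$ is a subdivision of the simplex $\sigma$, so it too is a $k$-ball, and Theorem~\ref{freeness} applies to all three complexes. First I would record the input: since $S^r(\widehat\Delta)$ and $S^r(\widehat\Delta'')$ are free, the ``only if'' direction of Theorem~\ref{freeness} gives $H_i(\RJ(\Delta))=0$ and $H_i(\RJ(\Delta''))=0$ for every $i<k$. The goal, by the ``if'' direction applied to $\Delta'$, is then simply to show $H_i(\RJ(\Delta'))=0$ for every $i<k$.

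The first substantive step is to pin down the homology of the quotient complex $Q$ from Proposition~\ref{diagram}. By Theorem~\ref{splitS}, $Q$ agrees with $\RJ(\Delta'')$ termwise, and with the same differentials, in every degree $i<k$, while in degree $k$ one has $Q_k=\coker(v_k)=(\bigoplus_{\sigma\in\Delta''_k}R)/R$, the quotient of $\RJ(\Delta'')_k$ by the diagonal copy of $R$. The key observation is that the diagonal element $\sum_{\sigma\in\Delta''_k}1_\sigma$ is a cycle: under $\partial_k$ every interior codimension-one face of $\Delta''$ receives contributions from its two adjacent top cells with opposite induced orientation, so these cancel and the diagonal lies in $\ker\partial_k$. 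Quotienting the top term of a complex by a submodule of cycles leaves $\operatorname{im}\partial_k$ unchanged and alters nothing in lower degrees, so $H_i(Q)\simeq H_i(\RJ(\Delta''))$ for all $i<k$; equivalently this is the long exact sequence of
\[
0 \longrightarrow R \longrightarrow \RJ(\Delta'') \longrightarrow Q \longrightarrow 0,
\]
where $R$ sits in homological degree $k$. In particular $H_i(Q)=0$ for $i<k$.

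Finally I would feed this into the long exact homology sequence of the short exact sequence of complexes $0\to\RJ(\Delta)\to\RJ(\Delta')\to Q\to 0$ from Proposition~\ref{diagram}:
\[
\cdots \longrightarrow H_i(\RJ(\Delta)) \longrightarrow H_i(\RJ(\Delta')) \longrightarrow H_i(Q) \longrightarrow \cdots.
\]
For each $i<k$ the two flanking terms $H_i(\RJ(\Delta))$ and $H_i(Q)$ vanish by the previous steps, so exactness forces $H_i(\RJ(\Delta'))=0$. Since $\Delta'$ is a $k$-ball, the ``if'' direction of Theorem~\ref{freeness} then yields that $S^r(\widehat\Delta')$ is free, as desired.

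I expect the only genuine obstacle to be the middle step: identifying $Q$ precisely enough to see that it differs from $\RJ(\Delta'')$ only by collapsing the diagonal cycle in top degree, and confirming that this collapse is invisible to homology in degrees below $k$. Everything else is the formal two-step diagram chase through the long exact sequences, using the freeness criterion of Theorem~\ref{freeness} to translate between freeness of spline modules and vanishing of homology.
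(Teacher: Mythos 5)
Your proposal is correct and takes essentially the same route as the paper: the paper likewise identifies $H_k(Q)$ with $S^r(\widehat\Delta'')$ modulo constant splines via Theorem~\ref{splitS}, and then combines the long exact homology sequence of the short exact sequence of complexes in Proposition~\ref{diagram} with the freeness criterion of Theorem~\ref{freeness} (Theorem 4.10 of \cite{s1}). Your middle step --- the auxiliary sequence $0\to R\to \RJ(\Delta'')\to Q\to 0$ with the diagonal a cycle, yielding $H_i(Q)\simeq H_i(\RJ(\Delta''))=0$ for $i<k$ --- just spells out a detail the paper leaves implicit, as does your (correct) standing assumption that the complexes involved are topological $k$-balls.
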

\begin{proof}
By Theorem~\ref{splitS}, $H_k(Q) \simeq S^r(\widehat \Delta'')$, modulo
constant splines. The result follows from the long exact sequence in homology associated with the short exact sequence of complexes in 
Proposition~\ref{diagram} coupled with 
Theorem 4.10 of~\cite{s1}.
\end{proof}

\section{Applications and computations}\label{computations}
In this section, we apply the results of \S~\ref{homology}  to various subdivisions 
$\Delta''$ of $\sigma$ such that $S^r(\widehat \Delta'')$ is free. Our starting point is
recent progress in obtaining the dimension of multivariate splines of arbitrary degree and
smoothness on the so-called Alfeld split $A(T_k)$ of an $k$-dimensional simplex $T_k$ in $\R^k$, 
which is a higher dimensional analog of the  Clough-Tocher split of a triangle, see~\cite{a1} or Sections~18.3, 18.7 of~\cite{ls}. The split $A(T_k)$ is 
obtained from a single simplex $T_k$ by adding a single  interior vertex $u$, 
and then coning over the boundary of $T_k$.  A formula for the dimension of the space $S_d^r( A(T_k))$ 
of splines of smoothness $r$ and polynomial degree at most $d$ on $A(T_k)$ was conjectured in~\cite{fs}, 
and proved in \cite{s3}, where it was also shown that the module is free. 
\begin{thm}\label{alfeld} \cite{s3} Let $A(T_k)$ be the Alfeld split of an $k$-simplex $T_k$ in $\R^k$. Then
\[ 
\dim S^r_d(A(T_k)) = {d+k \choose k} +  A(k,d,r),\]
where
\[A(k,d,r):=\begin{cases} 
k \binom{d+k-\frac{(r+1)(k+1)}{2}}{k},\hskip 72pt \mbox{ if r is odd},\\
\binom {d+k-1-\frac{r(k+1)}{2}}{k}+\cdots + \binom{d-\frac{r(k+1)}{2}}{k}, \mbox{ if r is even}. 
\end{cases}
\] 
Moreover, the associated module of splines $S^r( \widehat A(T_k))$ is free for any $r$.
\end{thm}
The Alfeld split can be further refined to obtain other splits useful in applications, in particular
for constructing macro-element spaces. Such constructions are not possible unless the exact dimension 
of the spline space of interest is known.  In this section we concentrate on $k$-dimensional analogs
of two known refinements of Alfeld splits in $\R^2$ and $\R^3$. The first one is known in $\R^2$ as the 
Powell-Sabin split of a triangle, see Figure~\ref{2dfacet}, and Sections~6.3, 7.3, 8.4 of~\cite{ls}, and the references therein. 
Its  analog  in $\R^3$ has been called both  
Worsey-Farin  and Clough-Tocher, see e.g. Section~18.4 and~18.8 of~\cite{ls} with the references therein.
In order to eliminate any ambiguity, we introduce 
the following definition in $\R^k$.

\begin{figure}
\vskip 15pt
\begin{minipage}{0.45\linewidth}
\centering
 \includegraphics[keepaspectratio=true,  width=50mm, height=57mm]{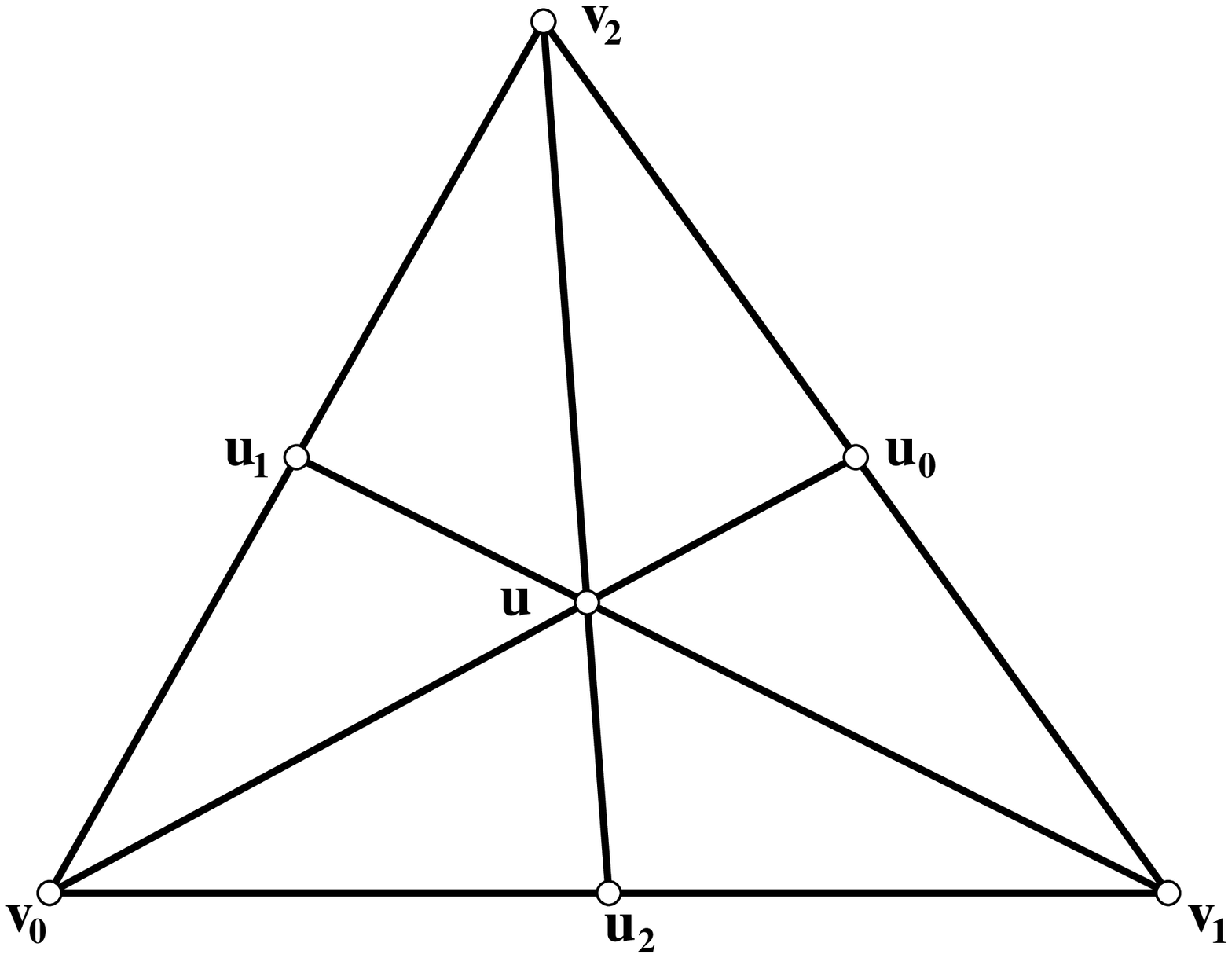}
 \caption{$F(T_2)$}
\label{2dfacet}
\end{minipage}
\hspace{0.5cm} 
\begin{minipage}{0.45\linewidth}
\centering
\includegraphics[keepaspectratio=true,  width=50mm, height=57mm]{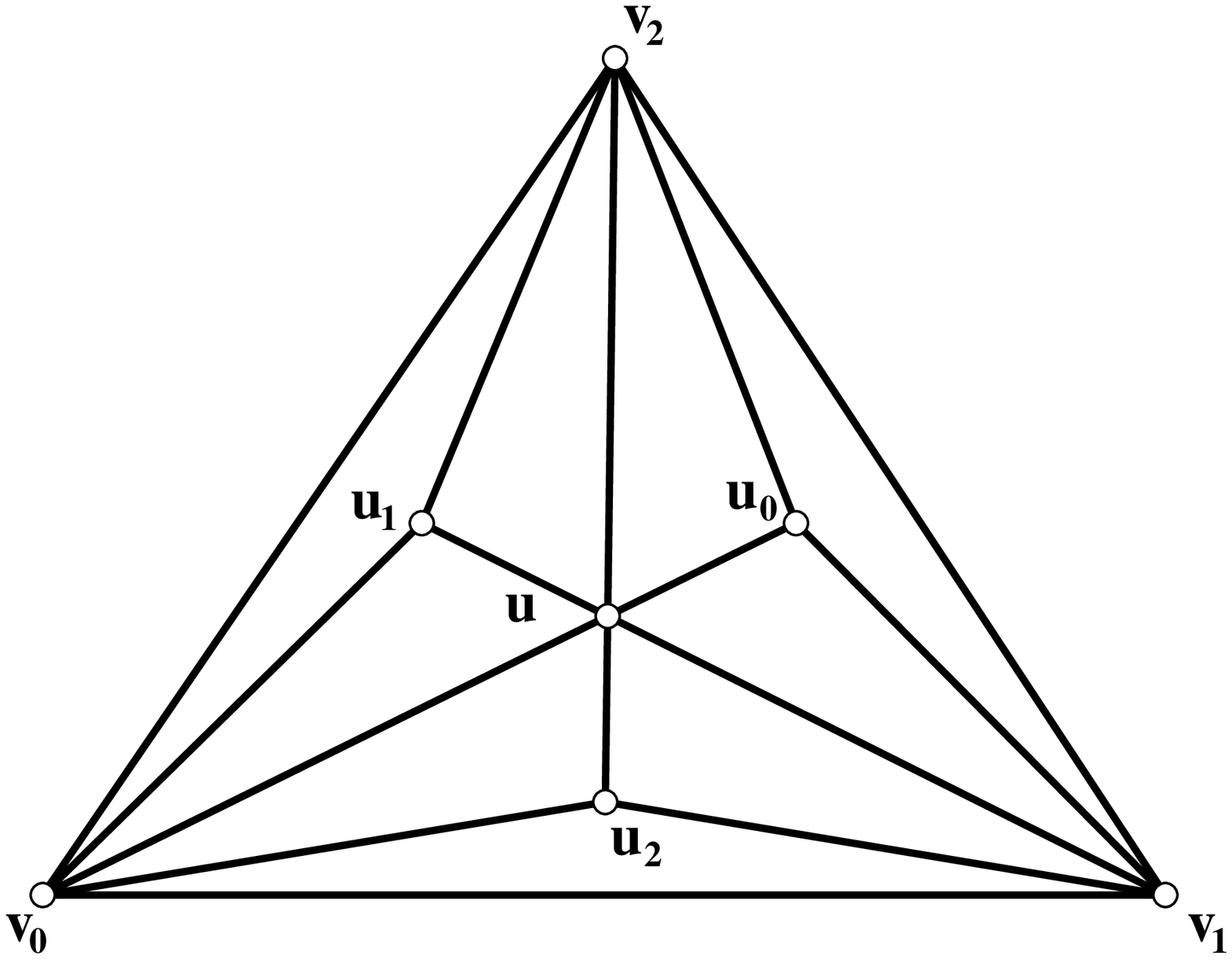}
\vskip 12pt
 \caption{$AA(T_2)$ }
 \label{2daa}
\end{minipage}
\end{figure}

\begin{defn}\label{WF} For a full-dimensional $k$-simplex $T_k:=[v_0,v_1,\dots,v_k] \subseteq \R^k$, let $A(T_k)$ be the Alfeld split
with the interior vertex $u$. The facet split $F(T_k)$  is obtained by further subdividing~$A(T_k)$ as follows. For 
each $i=0,\dots,k$, let~$F_i$ be the facet of~$T_k$ opposite vertex~$v_i$. Let  $u_i$ be the point strictly interior to $F_i$ 
and collinear with $v_i$ and $u$. Each $u_i$ induces a $(k-1)$-dimensional  Alfeld
split $A(F_i)$ of $F_i$. Finally, cone $u$ over  $A(F_i)$ forming a pyramid $P_i$ in $\R^k$. The collection
of $k+1$ pyramids $P_i$  is the facet split $F(T_k)$.
\end{defn}
Note that if $u$ is the barycenter of $T_k$, and each $u_i$ is the barycenter of $F_i$, then the collinearity condition is satisfied.
$F(T_k)$ consists of $k^2+k$ simplices, has one interior vertex
$u$, and $2k+2$ boundary vertices.
Figure~\ref{fem} shows an unfinished (for clarity) $F(T_3)$, where Definition~\ref{WF} was carried out for $i=0$ only, thus splitting 
the facet $F_0$ only and forming a pyramid $P_0$ while keeping the remaining three subtetrahedra of $A(T_3)$ intact. 
\begin{prop}\label{pyramid} Let $T_{k-1}$ be  a $(k-1)$-simplex in $\R^k$, and let $P_k$ be a pyramid in $\R^k$ obtained by forming a cone
with the base  $A(T_{k-1})$ and an arbitrary vertex not coplanar with $\aff(T_{k-1})$. Then 
\[ 
\dim S^r_d(P_k) = {d+k \choose k} +  P(k,d,r),\]
where
\[P(k,d,r):=\begin{cases} 
(k-1) \binom{d+k-\frac{(r+1)k}{2}}{k},\hskip 32pt \mbox{ if r is odd}\\
\binom {d+k-1-\frac{rk}{2}}{k}+\cdots + \binom{d+1-\frac{rk}{2}}{k}, \mbox{ if r is even}. 
\end{cases}
\] 
Moreover, the associated module of splines $S^r( \widehat P_k)$ is free for any $r$.
\end{prop}
\begin{proof}
Since  a pyramid $P_k\subseteq {\mathbb R}^{k}$ is a cone over the Alfeld split 
$A_{k-1} \subseteq \mathbb{R}^{k-1}$ of a tetrahedron $T_{k-1}$, 
Theorem~\ref{alfeld} yields
\[
\dim S^r_d(P_k)=\sum_{i=0}^d\Bigg[{i+k-1 \choose k-1} +  A(k-1,i,r)\Bigg]={d+k \choose k} +  P(k,d,r),
\]
and the proof is complete.
\end{proof}
The second refinement of  interest is the $k$-dimensional analog of the 
so-called double Clough-Tocher split in $\R^2$, see Figure~\ref{2daa}, and Section 7.5 of~\cite{ls} along with the references therein. 
We shall call the new refinement
the \textit{double Alfeld} split to emphasize the multivariate nature. 

\begin{defn}\label{double} For an $k$-simplex $T_k:=[v_0,v_1,\dots,v_k] \subseteq \R^k$, let $A(T_k)$ be the Alfeld split
with the interior vertex $u$. The double Alfeld split $AA(T_k)$  is obtained by further subdividing~$A(T_k)$ as follows for 
each $i=0,\dots,k$.  Let~$F_i$ be the facet of~$T_k$ opposite vertex~$v_i$. Let  $u_i$ be a point strictly interior to 
the simplex $T_k^i:=[u, F_i]$ and collinear with $v_i$ and $u$. Each $u_i$ induces an  Alfeld
split $A(T_k^i)$ of $T^i_k$. The collection
of $k+1$ Alfeld splits $A(T_k^i)$  is the double Alfeld split $AA(T_k)$.
\end{defn}

\begin{figure}
\begin{minipage}{0.45\linewidth}
\centering
 \includegraphics[keepaspectratio=true,  width=50mm, height=57mm]{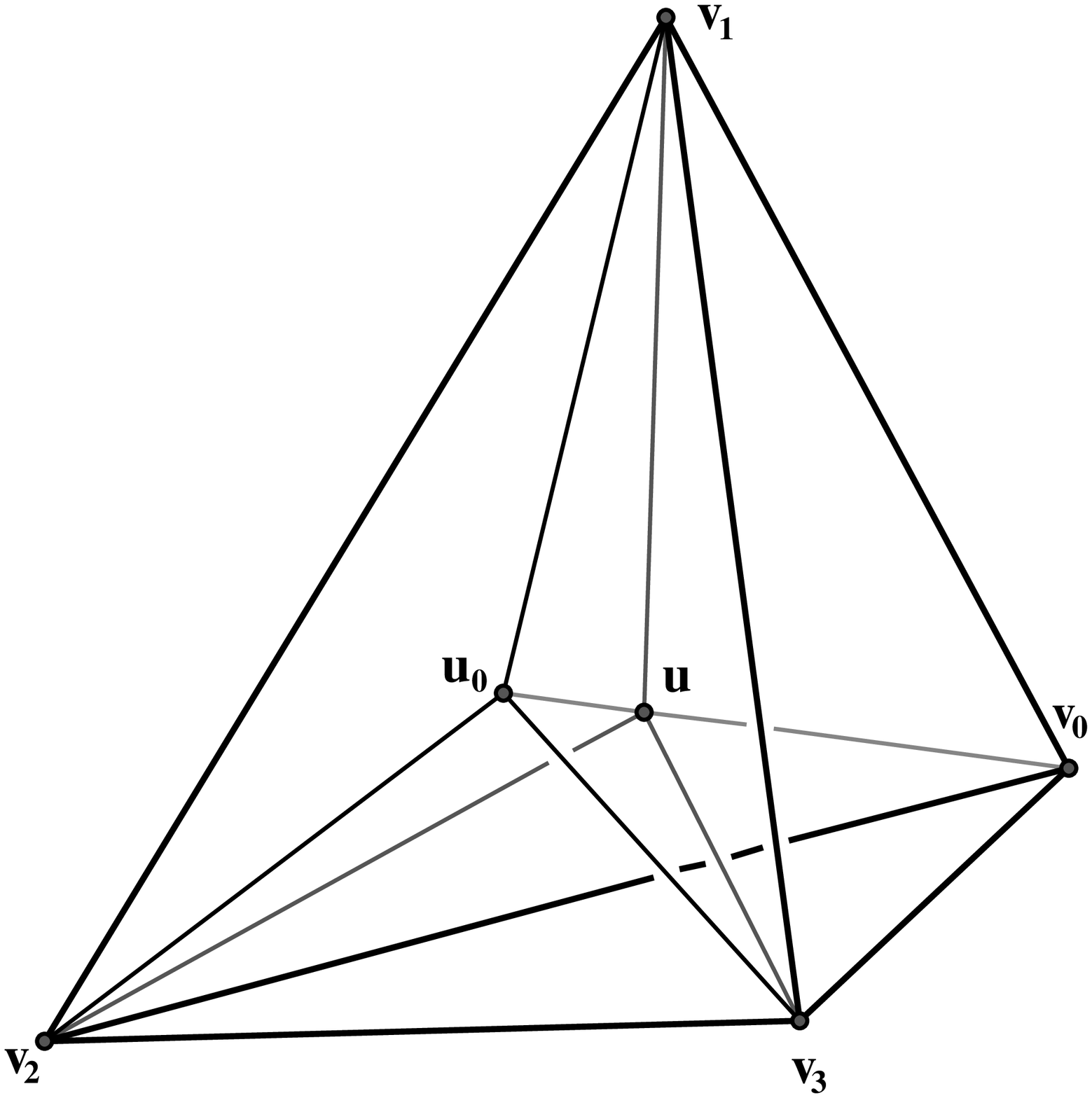}
 \caption{A part of $F(T_3)$}
\label{fem}
\end{minipage}
\hspace{0.5cm} 
\begin{minipage}{0.45\linewidth}
\centering
\includegraphics[keepaspectratio=true,  width=50mm, height=67mm]{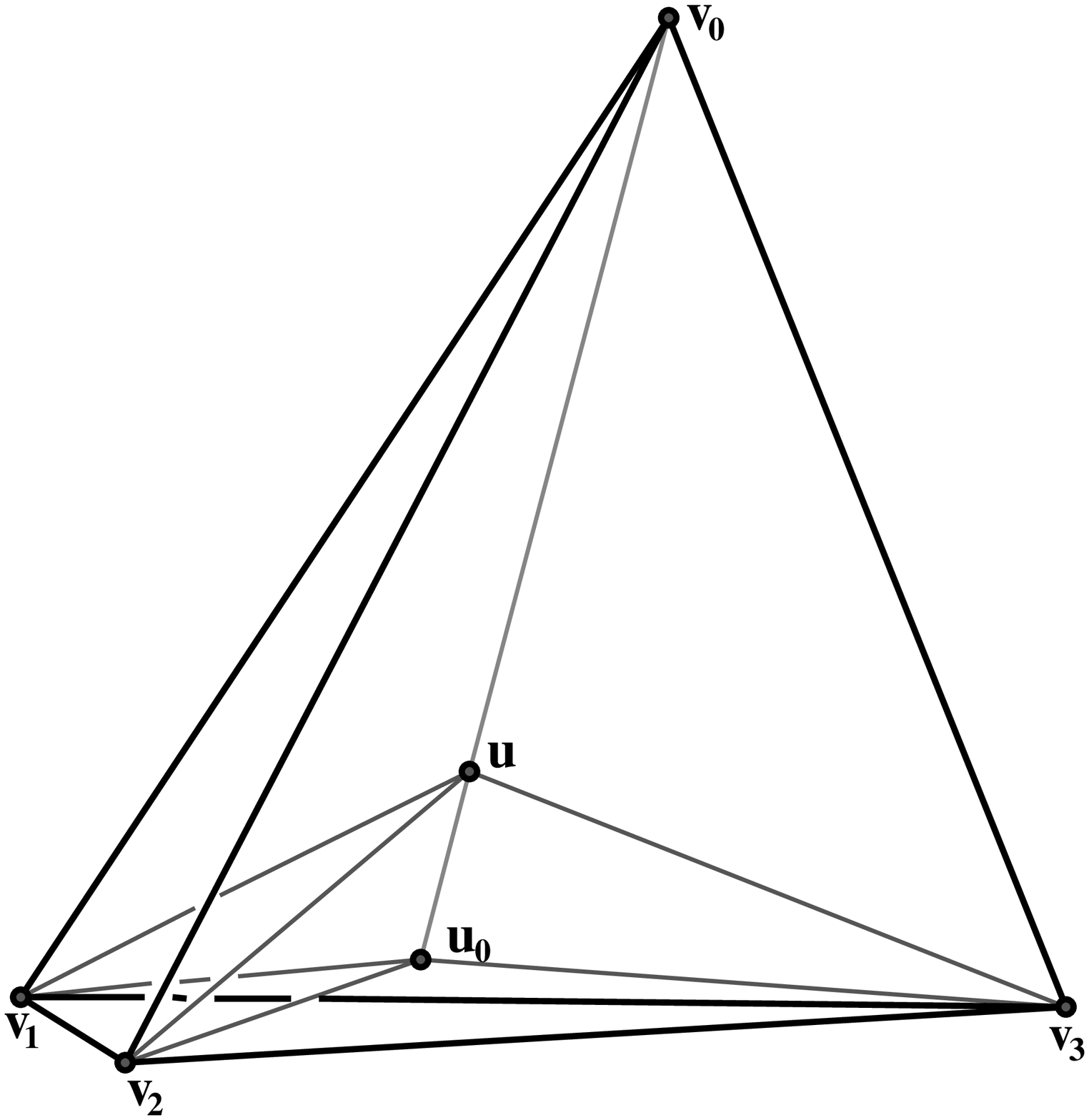}
 \caption{A part of  $AA(T_3)$}
 \label{fem1}
\end{minipage}
\end{figure}

Note that if $u$ is the barycenter of $T_k$, and each $u_i$ is the barycenter of $T_k^i$, then the collinearity 
condition is satisfied. $AA(T_k)$ consists of $(k+1)^2$ simplices, has $k+2$ interior vertices,
 and $k+1$ boundary vertices.  Figure~\ref{fem1}
shows an unfinished (for clarity) $AA(T_3)$,  where Definition~\ref{double} was carried out for $i=0$ only, thus splitting 
the subterahedron $T_3^0$ only while keeping the remaining three subtetrahedra of $A(T_3)$ intact.
We are now ready to prove the main result of this section.
\begin{thm}\label{main} For an $k$-simplex $T_k$ in $\R^k$ let $F(T_k)$ and $AA(T_k)$ be
the associated facet and double Alfeld splits as in Definition~\ref{WF} and~\ref{double}. Then
\begin{align*}
\dim S^r_d(F(T_k)) = &{d+k \choose k} + A(k,d,r)+(k+1) P(k,d,r),\\
\dim S^r_d(AA(T_k)) = &{d+k \choose k} + (k+2) A(k,d,r),
\end{align*}
where $A(k,d,r)$ and $P(k,d,r)$ are as in Theorem~\ref{alfeld} and Proposition~\ref{pyramid}, respectively.
Moreover, the associated modules of splines $S^r( \widehat F(T_k))$ and  $S^r( \widehat {AA}(T_k))$ are free for any $r$.
\end{thm}
\begin{proof} We start by subdividing the single simplex $\sigma:=[u,v_1,v_2\dots,v_k]$ in $A(T_k)$, as in Definition~\ref{splitSD}. 
In the case of the facet split, $\Delta_F''$ is the pyramid $P_k$  described in Theorem~\ref{pyramid}. Figure~\ref{fem} demonstrates  
the 3D case, where the point $u_0$ is placed in the face $[v_1,v_2,v_3]$.  For the 
double Alfeld split, $\Delta_{AA}''$ is the Alfeld split of $\sigma$. Figure~\ref{fem1} depicts the 3D case, where the point $u_0$ is placed in the interior of the tetrahedron
$[u, v_1, v_2, v_3]$.
In either case, due to the collinearity conditions on $u$, $v_0$ and $u_0$,
the resulting subdivisions $\Delta_F'$  and $\Delta'_{AA}$ are simple and split as in Definition~\ref{splitSD}. 
Then by Corollary~\ref{sesS} and Theorem~\ref{splitS},
we obtain 
\begin{align*}
\dim S^r_d(\Delta'_F) = &\dim S^r_d(A(T_k))+ \dim S^r_d(P_k)-\binom{d+k}{k},\\
\dim S^r_d(\Delta'_{AA}) = &\dim S^r_d(A(T_k))+ \dim S^r_d(A(T_k))-\binom{d+k}{k}.
\end{align*}
Moreover, since by Theorem~\ref{allFree} the associated modules of splines $S^r( \widehat \Delta'_F)$ and  $S^r( \widehat\Delta'_{AA} )$ are free, we 
can apply the same technique to the next simplex $[u,v_0,v_2,v_3,\dots,v_k]$ in the intermediate subdivision $\Delta'_F$ or $\Delta'_{AA}$, and so on. Using the dimension formulae in Theorem~\ref{alfeld} and 
Proposition~\ref{pyramid} completes the proof.
\end{proof}
 \section{Remarks}
 \begin{rmk}\label{bases} As a consequence of the split of the module
of splines on $\Delta'$, we also obtain explicit bases for $S^r(\Delta')$ essentially as a union of generators for $S^r(\Delta)$ and $S^r(\Delta'')$. The generators are not   as useful in applications as more traditional B-spline or \B~bases, and an efficient conversion algorithm  is an open computational problem. 
 \end{rmk}
  \begin{rmk}\label{partial} The proof of Theorem~\ref{main} holds for partial facet and double Alfeld splits, i.e. for the case where not every
  tetrahedron in $A(T_k)$ is subdivided. Such partial subdivisions are useful  in the context of  boundary finite elements. 
 \end{rmk}
 \begin{rmk}\label{collinearity} As  Example~\ref{r1} demonstrates, 
the requirement of the collinearity in both Definition~\ref{WF} and~\ref{double} can be omitted for $r=1$.
 \end{rmk}

 \begin{rmk}\label{other} The splitting method of \S~\ref{homology} can be applied to more subdivisions, including those of a simplex in $\R^k$.
 We focused on two well-known splits that do not require consideration of multiple cases stemming from exact geometry. We also note that the freeness of the modules of splines involved in the splitting method is a sufficient but not a necessary condition. We are investigating extensions of the  results here to other situations.
 \end{rmk}
\noindent {\bf Acknowledgements} 
Computations in the Macaulay2 package of Grayson and Stillman (available at {\tt http://www.math.uiuc.edu/Macaulay2}) and in Alfeld's spline software (available at {\tt http://www.math.utah.edu/$\sim$pa}) were essential to this work. We also thank the Mathematische Forschungsinstitut Oberwolfach, where our collaboration began.

%%%%%%%%%%%%%%%%%%%%%%%%%%%%%%%%%%%%%%%%%%%%%%%%%%%%%%%%
% Back to single space
\renewcommand{\baselinestretch}{1.0}
\small\normalsize % to get previous line to take
%%%%%%%%%%%%%%%%%%%%%%%%%%%%%%%%%%%%%%%%%%%%%%%%%%%%%%%%

\bibliographystyle{amsalpha}

\end{document}